\newtheorem{Lm}{Lemma}
\newtheorem{Thm}[Lm]{Theorem}
\newtheorem{Prop}[Lm]{Proposition}
\newtheorem{Cor}{Corollary}
\newtheorem{Conj}{Conjecture}
\theoremstyle{definition}
\newtheorem{Def}{Definition}
\newtheorem{Rem}{Remark}
\newtheorem{Ex}{Example}
\def\E{\mathbb{E}}
\def\eps{\varepsilon}
\renewcommand\P{\mathbb{P}}
\def\Z{\mathbb{Z}}
\def\N{\mathbb{N}}
\def\T{\mathbb{T}}
\def\tz{\widetilde{z}}
\def\dist{\mathop{\mathrm{dist}}}
\def\Leb{\mathop{\mathrm{Leb}}}
\def\const{\mathop{\mathrm{const}}}
\def\Sc{S^1}
\newcommand{\Ind}{\mathbf{I}}
\newcommand{\mL}{\mathcal{L}}
\def\bdef{\begin{Def}}
\def\endef{\end{Def}}
\def\bthm{\begin{Thm}}
\def\ethm{\end{Thm}}
\def\bprop{\begin{Prop}}
\def\enprop{\end{Prop}}
\def\blm{\begin{Lm}}
\def\elm{\end{Lm}}
\def\bcor{\begin{Cor}}
\def\ecor{\end{Cor}}
\def\brm{\begin{Rem}}
\def\erm{\end{Rem}}
\def\bfig{\begin{picture}}
\def\efig{\end{picture}}
\def\beq{\begin{eqnarray}}
\def\eneq{\end{eqnarray}}
\def\beal{\begin{aligned}}
\def\enal{\end{aligned}}
\newcommand{\symdiff}{\mathop{\Delta}}
\newcommand{\ddelta}{\delta}
\newcommand{\bm}{\overline{\mu}}
\newcommand{\supp}{\mathop{\mathrm{supp}}}
\title{Synchronization properties of random piecewise isometries}
\author[V. Kleptsyn]{Victor Kleptsyn}
\address{CNRS, Institute of Mathematical Research of Rennes, IRMAR, UMR 6625 du CNRS}
\email{kleptsyn@gmail.com}
\thanks{V.K. was supported in part by RFBR project 13-01-00969-a and RFBR/CNRS joint project 10-01-93115-CNRS\_a}
\author[A. Gorodetski]{Anton Gorodetski}
\address{Department of Mathematics, University of California, Irvine CA 92697, USA}
\email{asgor@math.uci.edu}
\thanks{A.\ G.\ was supported in part by NSF grants DMS-1301515 and IIS-1018433.} % DMS--0901627 and
\date{\today}
\begin{document}

\begin{abstract}
We study the synchronization properties of the random double rotations on tori. We give a criterion that show when synchronization is present in the case of random double rotations on the circle and prove that it is always absent in dimensions two and higher.
\end{abstract}

\maketitle

\section{Introduction}

The observation of a synchronization effect goes back at least to 17th century, when Huygens~\cite{Hu} discovered the synchronization of two linked pendulums. Since then, synchronization phenomena have been observed in numerous systems and settings, see \cite{PR} for a comprehensive survey of the subject. In the theory of dynamical systems, synchronization usually refers to \emph{random} dynamical system trajectories of different initial points converging to each other under the application of a sequence of random transformations. A first such result is the famous Furstenberg's Theorem~\cite{Fur}, stating that under some very mild assumptions, the angle (mod~$\pi$) between the images of any two vectors under a long product of random matrices (exponentially) tends to zero. Projectivizing the dynamics, it is easy to see that this theorem in fact states that random trajectories of the quotient system on the projective space (exponentially) approach each other.

For random dynamical systems on the circle, several results are known. Surely, in random projective dynamics there is a synchronization due to the simplest possible case of Furstenberg's Theorem. In 1984, this result was generalized to the  setting of homeomorphisms (with some very mild and natural assumptions of minimality of the action and the presence of a North-South map) in the work of a physicist V.\,A.\,Antonov \cite{A}, motivated by questions from celestial mechanics. Unfortunately, this work stayed unnoticed by the mathematical community for a long time. Antonov's theorem was later re-discovered in~\cite{KN} (see also the exposition in~\cite{GGKV}). It was further generalized for a non-minimal dynamics in~\cite{DKN}.

The local behavior of (random) orbits of a smooth system is governed by (random) Lyapunov exponents, and their negativity implies at least local synchronization. A theorem of P.\,Baxendale~\cite{Bax} states that for a $C^1$-random dynamical system on a compact manifold without a common invariant measure, there exists an ergodic stationary measure with negative volume Lyapunov exponent. In the one-dimensional case, as there is only one Lyapunov exponent, this implies local contraction, also establishing the exponential speed of contraction in Antonov's Theorem under the additional $C^1$-smoothness assumption. Related results were also obtained in \cite{H1}. These statements have analogues for Riemannian transversely-conformal foliations of compact manifolds, when the random long composition is replaced by the holonomy map along a long random leafwise Brownian path, see~\cite{DK}.

Similar phenomena appear in partially hyperbolic dynamics. Namely, in many cases, points on a generic central leaf tend to each other under the dynamics of the map. This leads to appearance of non-absolutely continuous central foliations. Initially this phenomenon was found by Ruelle, Shub, and Wilkinson \cite{SW, RW} for a perturbation of the product of an Anosov map by an identity map on the circle (see also \cite{H2}), and later was observed in many other partially hyperbolic systems \cite{BB, HP, PT, PTV, SX, V}. The major mechanism that explains ``synchronization'' along the central leaves in partially hyperbolic dynamics is non-vanishing Lyapunov exponents along central leaves (see \cite{GT} for a recent survey of this area).

In some examples the synchronization effect takes place in a slightly different form: a generic pair of points spend most of the time near each other, but sometimes they diverge sufficiently far apart (so that one cannot avoid time-averaging in the description of the synchronization). Such a behavior takes place even in non-random dynamical systems; in particular, it was obtained for some non-strictly expanding interval or circle maps \cite{I}, for the Cherry flow \cite{SV, K2}, and for the modified Bowen example \cite{K1}. Theorem \ref{t:sync} in this paper provides another example of the behavior of this type.

In this paper, we study the synchronization properties of the random piecewise isometries. Dynamical properties of piecewise isometries have been attracting attention lately. They form a surprisingly nontrivial class of dynamical systems even in dimension one, see \cite{BK, B, BT, SIA, Vo, Zh} for the case of one-dimensional piecewise isometries, and \cite{AG, AF, AKMPST, ANST, CGQ, G, GP, LV, TA} for the higher-dimensional case. These maps appear naturally in some applications \cite{ADF, D, SAO, W}, are related to dynamics of polygonal billiards \cite{BK,S}, serve as model problems for some non-linear systems \cite{As}, and also appear as a limit of some renormalization processes introduced to understand complicated non-linear systems \cite{DS}. We will consider the specific case of random double rotations on the circle and higher dimensional analogs. The double rotation of the circle is a map which acts as a rotation on a subset of the circle, and as a different rotation on the complement of the subset. Even in the case when the subset of the circle is an interval the dynamics of a double rotation can be quite non-trivial, see \cite{SIA, BC}. A detailed survey on double rotations can be found in \cite{C}.

Notice that in the case when synchronization is established via negative Lyapunov exponents, smoothness of the systems under consideration is crucial. The proof of Antonov's Theorem uses essentially the fact that homeomorphisms preserve the order of points of the circle. Thus, neither of these techniques is applicable to the case of double rotations.

However, we show that under certain reasonable assumptions the effect of the synchronization is present in the random double rotations on the circle. The mechanism ensuring its presence is thus different (see discussion after Conjecture~\ref{c.1}). Additionally, we show that synchronization seems to be essentially related to one-dimensionality of the phase space. Indeed, Theorem \ref{t:no-sync-2} claims that in higher-dimensional analogous setting synchronization is absent.

\section{Main Results}

\subsection{General setting}

Our results are devoted to the particular case of the dynamics on the $k$-torus, piecewise formed by the translation maps. Namely, take a set $A\subset \T^k$ and a pair of vectors $v_1,v_2\in\T^k$, and consider on the $k$-torus $\T^k$ the map
\begin{equation}\label{eq:fold}
f(x)=\begin{cases} x+v_1, & x\in A,\\
x+v_2, & x\notin A.
\end{cases}
\end{equation}

We consider iterations of $f$, between which the torus is shifted by a random vector, these vectors being chosen w.r.t. the Lebesgue measure and independently on different steps. Formally speaking, we set $\Omega=(\T^k)^{\N}$ to be the set of sequences of elements of $\T^k$, equipped with the probability measure $\P=\Leb^{\N}$, and associate to a sequence $w=(w_i)_{i\in\N}\in \Omega$ the sequence of random iterations
\begin{equation}\label{eq:rds}
F_w^n=f_{w_n}\circ\dots\circ f_{w_1},
\end{equation}
where $f_u(x)=T_u\circ f(x)$, and $T_u(x)=x+u$ is the translation by $u\in\T^k$. We describe the behavior of such dynamics, that (quite unexpectedly!) turns out to be different in the cases $k=1$ and $k>1$.

It is clear that if we compose some fixed translation $T_{u_0}$ with all the random maps $f_{w_i}$, we will get the same random dynamical system (since the Lebesgue measure is a stationary measure which is invariant under the translation $T_{u_0}$). In particular, if we apply $T_{-v_2}$ and set $v=v_1-v_2$, we can turn  the system (\ref{eq:fold}) into the system where
\begin{equation}\label{eq:f}
f(x)=\begin{cases} x+v, & x\in A,\\
x, & x\notin A.
\end{cases}
\end{equation}
{\it In what follows we will always consider the system (\ref{eq:f}) with the following {\bf standing assumption}: components of the vector $v$ together with 1 form a linearly independent $k+1$-tuple over $\mathbb{Q}$ (or, equivalently, the translation $T_v:\mathbb{T}^k\to \mathbb{T}^k$ is minimal).}

An important ingredient of our studies is the following function that we associate to the set~$A$:
\begin{Def}
Let $A\subset \T^k$ be a Borel subset. Its \emph{displacement function} $\varphi_A:\mathbb{T}^k\to \mathbb{R}$ is defined as
$$
\varphi_A(\eps):=\Leb(A\symdiff T_{\eps}(A)).
$$
We say that $A$ \emph{has no translational symmetries}, if
$$
\varphi_{A}(\eps) =0 \quad \Leftrightarrow \quad \eps=0.
$$
\end{Def}
Note that standard measure theory arguments easily imply that $\varphi_{A}$ is a continuous function of~$\eps$ (to show that it does not have discontinuities with oscillation $>\delta$ it suffices to approximate $A$ up to $\frac{\delta}{2}$-measure set by a finite union of rectangles).

\subsection{One-dimensional case, synchronization}

We start with the results on one-dimensional case, i.e. for random double rotations on the circle. It turns out that the integrability of $\frac{1}{\varphi_A(\eps)}$ distinguishes between two possible behaviors, described in Theorem  \ref{t:sync} and Theorem \ref{t:no-sync} below. We start with the non-integrable case; in particular, this is the case if $A$ is a union of $l\ge 1$ intervals: in this case, $\varphi_A(\eps)\sim l\eps$ as $\eps\to 0$ and the system manifests (one of the forms of) synchronization:

\begin{Thm}\label{t:sync}
Let $A\subset \Sc$ be a Borel set that has no translational symmetries, and assume that
\begin{equation}\label{eq:non-int}
\int_{\Sc} \frac{1}{\varphi_A{(\eps)}}\, d\eps =+\infty.
\end{equation}
Then for any $x,y\in \Sc$ for almost any sequence $w\in\Omega$ of iterations one has
\begin{equation}\label{eq:ineq}
\forall \delta>0 \quad \frac{1}{N} \# \{n\in \{1,\dots, N\} \mid \dist(F_w^n(x),F_w^n(y))<\delta \} \to 1 \quad \text{ as } N\to\infty.
\end{equation}
\end{Thm}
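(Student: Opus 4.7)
The plan is to analyze the difference process $z_n := F_w^n(x) - F_w^n(y) \in \T$ as a Markov chain on $\T$ for which $0$ is absorbing, and to show that this chain spends an overwhelming fraction of its time near $0$. Since $f_{w_n} = T_{w_n} \circ f$, the translations $T_{w_n}$ cancel: $z_n = f(F_w^{n-1}(x)) - f(F_w^{n-1}(y))$, and (\ref{eq:f}) forces $f(p) - f(q) - (p - q) \in \{0, \pm v\}$ depending on which of $p, q$ lie in $A$. Because $w_{n-1}$ is uniform and independent of the past, $F_w^{n-1}(x)$ is uniformly distributed on $\T$ conditionally on $z_{n-1}$. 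Hence $(z_n)$ is a Markov chain whose transition from a state $\eps$ is: stay with probability $1 - \varphi_A(\eps)$ and jump to $\eps \pm v$ each with probability $\tfrac{1}{2}\varphi_A(\eps)$, the two jump probabilities being equal by translation invariance of Lebesgue measure. The no-translational-symmetries hypothesis makes $0$ the unique absorbing state.

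Next I would pass to the embedded random walk. The chain lives on the orbit $z_0 + v\Z \subset \T$; identifying $k \in \Z$ with $z_0 + kv$, the embedded chain $(Y_j)_{j \geq 0}$ at successive jump times is a null-recurrent simple symmetric random walk on $\Z$, with independent $\mathrm{Geom}(p_{Y_j})$ holding times $G_j$, where $p_k := \varphi_A(z_0 + kv)$. Let $\tau_K$ be the time of the $K$-th jump and $K(N)$ the number of jumps by time $N$. Fixing $\delta > 0$ and $S_\delta := \{k : |z_0 + kv| \geq \delta\}$, continuity of $\varphi_A$ together with the no-symmetry hypothesis give $p_k \geq c_\delta > 0$ on $S_\delta$. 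Each geometric wait at a state in $S_\delta$ thus has conditional mean $\leq 1/c_\delta$ and exponential tails, so standard concentration (Bernstein combined with Borel--Cantelli along the jump index $K$) yields
\[
\#\{n \leq N : |z_n| \geq \delta\} \;\leq\; \sum_{j \leq K(N),\, Y_j \in S_\delta} G_j \;=\; O(K(N)) \quad \text{a.s.}
\]
Consequently the theorem reduces to showing $K(N)/N \to 0$ almost surely, equivalently $\tau_K/K \to \infty$ as $K \to \infty$ almost surely.

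The hard part will be this last divergence $\tau_K / K \to \infty$. Conditionally on the walk, $\E[\tau_K \mid (Y_j)] = \sum_{k \in \Z} L_K(k)/p_k$ with $L_K(k) = \#\{j < K : Y_j = k\}$, and the local CLT gives $\E L_K(k) \asymp \sqrt{K}$ uniformly for $|k| \leq C\sqrt{K}$. Applying Weyl equidistribution of $\{z_0 + kv\}$ in $\T$ to the bounded continuous truncations $\min(1/\varphi_A, R)$ and letting $R \uparrow \infty$, the non-integrability hypothesis (\ref{eq:non-int}) forces
\[
\frac{1}{\sqrt{K}} \sum_{|k| \leq C\sqrt{K}} \frac{1}{p_k} \;\longrightarrow\; \int_\T \frac{d\eps}{\varphi_A(\eps)} \;=\; +\infty,
\]
so $\E[\tau_K]/K \to \infty$; a truncation of the holding times ($G_j \mapsto \min(G_j, R/p_{Y_j})$) together with Chebyshev then upgrades this to an almost sure statement. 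The main obstacle is precisely this coupling: one must control the (random, only roughly uniform) SSRW local times $L_K(\cdot)$ together with irrational-rotation sampling of the \emph{unbounded} function $1/\varphi_A$ near $0$, and the truncation-and-monotone-convergence step has to close uniformly in $R$. The unboundedness of $1/\varphi_A$ at $0$ is the whole mechanism of synchronization, so it cannot be smoothed away.
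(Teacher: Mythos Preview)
Your setup matches the paper's exactly: the difference process $z_n$ as a Markov chain with the stated transition probabilities, the embedded simple symmetric random walk with geometric holding times, and the reduction of the theorem to two facts, namely that the time spent outside $U_\delta(0)$ grows at most linearly in the jump count $K$ while $\tau_K/K\to\infty$ almost surely.

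Where you diverge from the paper is in the hard step $\tau_K/K\to\infty$. You attack it via local-CLT control of the SSRW local times $L_K(k)$ combined with Weyl equidistribution of the lattice $\{z_0+kv\}$, and you honestly flag the resulting obstacle: coupling the random, only approximately uniform local times with sampling of the unbounded function $1/\varphi_A$, and closing the truncation argument uniformly in $R$. That obstacle is real on the route you chose, and your sketch does not close it; Chebyshev on the truncated holding times still leaves you to control $\frac{1}{K}\sum_{j<K}1/p_{Y_j}$ pathwise, which is exactly the problem.

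The paper bypasses this entirely with one observation: the trajectory $\tilde z_j=z_0+Y_j v$ of the embedded walk is itself almost surely equidistributed with respect to Lebesgue measure on $\Sc$. This is Kakutani's random ergodic theorem applied to the random dynamical system generated by $T_v$ and $T_{-v}$, for which Lebesgue is an ergodic stationary measure; a translation argument upgrades ``almost every $z_0$'' to ``every $z_0$''. Once you have trajectory equidistribution, you never touch local times or Weyl. Taking the bounded function $g=\Ind_{\Sc\setminus U_\delta(0)}\cdot\frac{1}{\varphi_A}$ and a strong law for independent variables of uniformly bounded variance gives
\[
\frac{1}{K}\sum_{j<K} G_j\,\Ind_{\tilde z_j\notin U_\delta(0)} \;\longrightarrow\; \int_{\Sc\setminus U_\delta(0)}\frac{dz}{\varphi_A(z)}\quad\text{a.s.},
\]
which simultaneously yields your $O(K)$ bound on time outside $U_\delta(0)$ and, since the right-hand side diverges as $\delta\downarrow 0$, the lower bound $\liminf_K \tau_K/K=+\infty$. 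The truncation is in $\delta$, on a bounded integrand, and closes by monotone convergence with no uniformity issue.
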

\begin{Rem}
Notice that (\ref{eq:ineq}) is equivalent to
$$
\frac{1}{N}\sum_{n=1}^N\dist(F_w^n(x),F_w^n(y))\to 0 \ \ \text{as}\ \ N\to \infty.
$$
\end{Rem}

This statement also has interesting consequences for iterations of the Lebesgue measure. To state it, we will need a way (which is one of many equivalent ways) to measure non-Diracness of a measure on the circle.
\begin{Def}
For any measure $m$ on the circle let
$$D(m):=\iint \dist(x,y) \, dm(x) \, dm(y).$$
\end{Def}

Then, we have the following

\begin{Thm}\label{t:Leb-sync}
Under the assumptions of Theorem~\ref{t:sync},
\begin{equation}\label{eq:Leb-fwd}
\forall \delta>0 \quad \P(D((F_w^n)_* \Leb)>\delta) \to 0 \quad \text{as } n\to\infty.
\end{equation}
\end{Thm}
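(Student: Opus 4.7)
I reduce, via Markov's inequality, to showing $\E_w[D((F_w^n)_*\Leb)] \to 0$ as $n \to \infty$, and then identify this with an expected distance of an auxiliary Markov chain on $\Sc$. Writing the double integral and swapping with expectation by Fubini,
\[
\E_w[D((F_w^n)_*\Leb)] = \iint_{\Sc\times\Sc} \E_w[\dist(F_w^n(x), F_w^n(y))]\, dx\, dy.
\]

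The key structural observation is that the difference process $\xi_n := F_w^n(x) - F_w^n(y)$, once we fix $\eps := x - y$ and draw $x$ uniformly on $\Sc$, is a Markov chain on $\Sc$ that jumps from a state $\eps'$ to $\eps' \pm v$ with probability $\varphi_A(\eps')/2$ each and stays at $\eps'$ with probability $1 - \varphi_A(\eps')$. Indeed, because the random shift $w_n$ inside $f_{w_n}$ is uniform and independent of everything earlier, $F_w^n(x)$ remains uniformly distributed on $\Sc$ conditional on the history of $\xi$; hence the event $\{F_w^n(x)\in A,\ F_w^n(y)\notin A\}$ that triggers the $+v$ jump has conditional probability $\Leb(A\cap (A+\xi_n)^c) = \varphi_A(\xi_n)/2$, and symmetrically for the $-v$ jump. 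Changing variables $(x,y)\mapsto (x,\eps)$ and integrating over uniform $x$ first then yields
\[
\E_w[D((F_w^n)_*\Leb)] = \int_{\Sc} \E^{\mathrm{MC}}_\eps[\dist(\xi_n, 0)]\, d\eps,
\]
where $\E^{\mathrm{MC}}_\eps$ denotes expectation for the chain started from $\xi_0 = \eps$.

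For each $\eps$, Theorem~\ref{t:sync} applied to any pair with difference $\eps$ gives the a.s.\ Cesàro convergence $\frac{1}{N}\sum_{n=1}^N \dist(\xi_n, 0)\to 0$; bounded convergence then yields $\frac{1}{N}\sum_{n=1}^N \E^{\mathrm{MC}}_\eps[\dist(\xi_n,0)]\to 0$, and dominated convergence over $\eps$ produces the Cesàro statement $\frac{1}{N}\sum_{n=1}^N \E_w[D((F_w^n)_*\Leb)] \to 0$. \textbf{The main obstacle} is upgrading this Cesàro convergence to pointwise convergence of $\E_w[D((F_w^n)_*\Leb)]$, equivalently, showing that the chain $\xi_n$ converges in distribution to $\delta_0$ from every starting point. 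Since $0$ is the unique absorbing state of the chain, the chain is aperiodic (positive holding probability at every other state), and the hypothesis $\int 1/\varphi_A = +\infty$ forbids any other probability invariant measure, this convergence in distribution should follow from a coupling or Lyapunov-function argument exploiting the strong attraction near $0$ (for instance via a concave penalty such as $-1/\log|\eps|_{\Sc}$). Once $\E^{\mathrm{MC}}_\eps[\dist(\xi_n, 0)]\to 0$ is established pointwise in $\eps$, DCT upgrades the integrated statement to $\E_w[D((F_w^n)_*\Leb)] \to 0$, and Markov's inequality completes the proof.
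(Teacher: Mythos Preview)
Your reduction via Markov's inequality and Fubini is fine, and your identification of the difference process $\xi_n$ as a Markov chain on $\Sc$ with the stated transitions is correct (and is exactly what the paper establishes in the proof of Theorem~\ref{t:sync}). The gap is precisely where you flag it: you do not actually prove the upgrade from Ces\`aro convergence to pointwise convergence of $\E^{\mathrm{MC}}_\eps[\dist(\xi_n,0)]$, and the heuristics you offer do not close it. For a starting point $\eps\notin\{nv:n\in\Z\}$, the chain lives on the countable orbit $\{\eps+nv:n\in\Z\}$, which does \emph{not} contain $0$; so $0$ is not an absorbing state of this chain at all, and the ``absorbing state plus aperiodicity'' reasoning does not apply. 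Viewed on $\Z$ through $n\mapsto\eps+nv$, the chain is a lazy simple random walk, which is null recurrent; the non-integrability hypothesis rules out invariant probability measures but null recurrence alone does not force convergence in distribution to a point. A Lyapunov function such as $-1/\log|\eps|$ would need to be a supermartingale-type quantity for the chain, and you have not checked this (nor is it clear it holds, since the jump size $v$ is fixed and not small compared to the current state). In short, the ``main obstacle'' really is the whole difficulty, and you have only restated it.

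The paper avoids this obstacle entirely by a different route. It first proves Theorem~\ref{t:rev}: for the \emph{reversed-order} compositions $F_{w,rev}^n$, Furstenberg's martingale argument gives almost sure convergence $(F_{w,rev}^n)_*\Leb\to\mathcal{L}(w)$ for some random measure $\mathcal{L}(w)$; the Ces\`aro statement coming from Theorem~\ref{t:sync} (integrated over $x,y$ and over $w$) then forces $\int D(\mathcal{L}(w))\,d\P(w)=0$, hence $\mathcal{L}(w)=\delta_{L(w)}$ almost surely. From this, $D((F_{w,rev}^n)_*\Leb)\to 0$ almost surely, so $\P(D((F_{w,rev}^n)_*\Leb)>\delta)\to 0$. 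The final step is the observation that for each fixed $n$ the laws of $F_w^n$ and $F_{w,rev}^n$ coincide (both are $n$-fold i.i.d.\ compositions), so $\P(D((F_w^n)_*\Leb)>\delta)=\P(D((F_{w,rev}^n)_*\Leb)>\delta)\to 0$. Thus the paper converts the Ces\`aro information into genuine pointwise-in-$n$ convergence by passing to the reversed order, where a martingale structure supplies an a.s.\ limit for free; this is exactly the device your direct Markov-chain approach is missing.
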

Note that this conclusion is  stronger than the one that can be obtained as an immediate corollary of Theorem~\ref{t:sync} by averaging on $x$ and $y$. Indeed, in the latter case we would have a convergence to zero only of Chesaro averages of the probabilities in~\eqref{eq:Leb-fwd}, and the theorem states convergence for the probabilities themselves.

In the theory of random dynamical systems, often together with the usual order of composition are considered the reversed-order compositions, with the next map being applied first:
\begin{Def}
For $w\in\Omega$, let
$$
F_{w,rev}^n:= f_{w_1}\circ\dots\circ f_{w_n}.
$$
\end{Def}
This order of composition has an advantage that in this case it is more likely that an individual sequence of images of a given measure converges. % (sf. the discussion in Sec.~\ref{??}).
 And indeed, this is what happens in our case.

\begin{Thm}\label{t:rev}
Under the assumptions of Theorem~\ref{t:sync}, there exists a measurable map $L:\Omega\to \Sc$, such that almost surely
\begin{equation}\label{eq:Leb-conv}
(F_{w,rev}^n)_* \Leb \to \ddelta_{L(w)} \quad \text{as } n\to\infty.
\end{equation}
\end{Thm}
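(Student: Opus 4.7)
The plan is to promote the reverse-order pushforwards to a measure-valued martingale, whose almost sure weak limit exists by general theory, and then to use Theorem~\ref{t:Leb-sync} to force this limit to be a Dirac mass.

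Set $\mu_n := (F_{w,rev}^n)_* \Leb$ and $\mathcal{F}_n := \sigma(w_1,\dots,w_n)$. Since $f_u(x) = f(x) + u$ with $u$ uniform on $\T^k$, Fubini gives $\mathbb{E}_u [(f_u)_* \Leb] = \Leb$. The key observation is that in the reversed order the newly sampled map $f_{w_{n+1}}$ is applied \emph{first}, so that $(f_{w_1}\circ\cdots\circ f_{w_n})_*$ may be pulled out of the conditional expectation:
\begin{equation*}
\mathbb{E}[\mu_{n+1}\mid \mathcal{F}_n] = (f_{w_1} \circ \cdots \circ f_{w_n})_* \, \mathbb{E}_{w_{n+1}}\bigl[(f_{w_{n+1}})_* \Leb\bigr] = (f_{w_1} \circ \cdots \circ f_{w_n})_* \Leb = \mu_n.
\end{equation*}
Thus for every $g \in C(\Sc)$ the sequence $\int g\,d\mu_n$ is a uniformly bounded real martingale. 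Applying Doob's convergence theorem along a countable dense family in $C(\Sc)$ and extending by uniform continuity of evaluation yields a measurable random probability measure $\mu_\infty(w)$ with $\mu_n \to \mu_\infty$ weakly, $\P$-almost surely.

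Next, since the reversal $(w_1,\dots,w_n) \mapsto (w_n,\dots,w_1)$ preserves the product measure $\P$, the random measures $(F_w^n)_* \Leb$ and $(F_{w,rev}^n)_* \Leb$ have the same law for each fixed $n$. By Theorem~\ref{t:Leb-sync} we therefore have $\P(D(\mu_n) > \delta) \to 0$ for every $\delta > 0$. The functional $D$ is weakly continuous on $\mathcal{P}(\Sc)$ (because $\dist$ is bounded continuous on $\Sc\times\Sc$ and $\mu\mapsto\mu\otimes\mu$ is weakly continuous), so passing to an a.s.-convergent subsequence gives $D(\mu_\infty) = 0$ almost surely. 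Hence $\mu_\infty$ is concentrated at a single point $L(w) \in \Sc$, and the measurability of $L$ follows from that of the map $w \mapsto \mu_\infty(w)$.

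The step requiring most care is the transfer of Theorem~\ref{t:Leb-sync} from forward to reverse order, resolved by the block-reversal invariance of $\P$. It is worth emphasizing that the martingale structure exists \emph{only} in the reverse order: in the forward composition one has $\mathbb{E}[(F_w^{n+1})_*\Leb\mid\mathcal{F}_n] = \Leb$ regardless of $n$, so there is no a priori individual-trajectory convergence, which is precisely why Theorem~\ref{t:rev} is stated for the reverse-order composition.
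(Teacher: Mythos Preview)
Your overall architecture---establish that $(F_{w,rev}^n)_*\Leb$ is a measure-valued martingale, obtain an almost-sure weak limit $\mu_\infty$, then use the functional $D$ together with the equality in law of forward and reverse compositions to force $D(\mu_\infty)=0$---is exactly the paper's. The one genuine problem is the appeal to Theorem~\ref{t:Leb-sync}. In the paper, Theorem~\ref{t:Leb-sync} is \emph{deduced from} Theorem~\ref{t:rev} (see its proof), so invoking it here makes the argument circular. The paper even remarks, immediately after stating Theorem~\ref{t:Leb-sync}, that what one gets directly from Theorem~\ref{t:sync} by averaging over $x,y$ is only the Ces\`aro statement $\frac{1}{N}\sum_{n=1}^N \E\, D((F_w^n)_*\Leb)\to 0$; upgrading to convergence of the individual probabilities $\P(D((F_w^n)_*\Leb)>\delta)$ is precisely what requires Theorem~\ref{t:rev} first.

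The repair is minimal and coincides with the paper's route: replace your use of Theorem~\ref{t:Leb-sync} by a direct use of Theorem~\ref{t:sync}. Integrating the conclusion of Theorem~\ref{t:sync} over $x,y$ and taking expectation gives $\frac{1}{N}\sum_{n=1}^N \E\, D((F_w^n)_*\Leb)\to 0$, hence by your equality-of-laws observation $\frac{1}{N}\sum_{n=1}^N \E\, D(\mu_n)\to 0$. Since you already have $\mu_n\to\mu_\infty$ weakly a.s.\ and $D$ is bounded and weakly continuous, dominated convergence yields $\E\, D(\mu_n)\to \E\, D(\mu_\infty)$; the Ces\`aro means of a convergent sequence share its limit, so $\E\, D(\mu_\infty)=0$ and $\mu_\infty$ is a.s.\ a Dirac mass. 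With this substitution your proof is correct and essentially identical to the paper's.
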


Let us now consider the particular case of $A$ being an interval. Note that for the reversed-order composition we can define the (random) ``topological attractor''. Namely, for any $w\in \Omega$ the sequence $\overline{F_{w,rev}^n(\Sc)}$ is a nested sequence of nonempty closed sets, and thus has a non-empty intersection:
\begin{Def}
The \emph{topological attractor} is a random set $X=X(w)$, defined as
$$
X(w)=\bigcap_{n\in\N} \overline{F_{w,rev}^n(\Sc)}
$$
\end{Def}
However, even though on every finite step, assuming that $A$ is an interval or a finite union of intervals, one has $\overline{F_{w,rev}^n(\Sc)}=\supp (F_{w,rev}^n)_*\Leb$, passing to the limit is not immediate. Namely, a conclusion
$$
\supp \lim_{n\to\infty} (F_{w,rev}^n)_*\Leb \subset X(w)
$$
is immediate, but equality is not at all guaranteed. The absence of such equality would mean that, even though \emph{most} of the initial Lebesgue measure after a large number of iterations is most probably concentrated near one point, there will be (once again, most probably) some parts of its image that are far away from this point. And quite interestingly, it seems that indeed such an effect takes place: in this random system there is a difference between the ``topological'' and ``measurable'' limit behaviors. Namely,  numerical simulations, as well as some \emph{very} rough heuristic arguments, predict the following
\begin{Conj}\label{c.1}
If $A$ is an interval, the topological attractor $X(w)$ is almost surely a Cantor set.
\end{Conj}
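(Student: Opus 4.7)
My plan is to prove total disconnectedness and perfectness of $X(w)$ separately, both through an associated random cellular automaton that encodes the tree of backward orbits. Compactness and nonemptiness of $X(w)$ are already clear: it is a nested intersection of nonempty compact sets, and (as a consequence of Theorem~\ref{t:rev}) it contains the almost-sure limit $L(w)$.

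\textbf{Reduction to a cellular automaton.} Fix $p\in \Sc$. By K\"onig's lemma (each $f_{w_k}$ has finite preimages), $p \in X(w)$ iff there is an infinite backward orbit $p=p_0, p_1, p_2, \dots$ with $p_k \in f_{w_k}^{-1}(p_{k-1})$. A direct computation gives $f_u^{-1}(y)\subseteq\{y-u,\; y-u-v\}$, with the first point valid iff it lies outside $A$ and the second valid iff it lies in $A$. Iterating, every position reachable at depth $n$ lies on the arithmetic progression $\{p-W_n-cv:0\le c\le n\}$ with $W_n:=w_1+\dots+w_n$, and coding the reached indices by $S_n\subseteq\{0,\dots,n\}$ yields the explicit recursion
\[
S_{n+1}=(S_n\cap I_{n+1}^c)\cup((S_n+1)\cap I_{n+1}),\qquad I_{n+1}:=\{c:p-W_{n+1}-cv\in A\}.
\]
So $p\in X(w)$ iff $S_n\ne\emptyset$ for every $n$.

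\textbf{Total disconnectedness.} I would show $\P(p\in X(w))=0$ for every fixed $p$; Fubini then gives $\Leb(X(w))=0$ a.s. The cardinality $|S_n|$ is a nonnegative martingale with $\E|S_n|=1$, since the increment $|S_{n+1}|-|S_n|=|S_n\cap(I_{n+1}-1)|-|S_n\cap I_{n+1}|$ has zero conditional mean by uniformity of $W_{n+1}$. Being integer-valued, $|S_n|$ stabilizes a.s.\ to some $|S_\infty|$. To exclude $|S_\infty|>0$, one needs a uniform lower bound
\[
\P(|S_{n+1}|\ne|S_n|\mid\mathcal{F}_n,\,|S_n|>0)\ge\delta>0,
\]
which follows from the observation that for any finite nonempty $S\subset\Z$ the step function $z\mapsto\#\{c\in S:cv\in[z-|A|,z]\}$ cannot be $v$-periodic (by irrationality of $v$ and finiteness of $S$); a quantitative version yields $\delta$ depending only on $|A|$ and $v$. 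A Borel-Cantelli argument then forces $|S_\infty|=0$ a.s., so $\P(p\in X(w))=0$ for every fixed $p$. Consequently $\Leb(X(w))=0$ a.s., and a compact subset of $\Sc$ with empty interior is totally disconnected.

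\textbf{Perfectness and main obstacle.} This is the delicate part. My plan is to exploit the self-similarity $X(w)=F_{w,rev}^n(X(\sigma^n w))$, where $\sigma$ is the shift on $\Omega$: if $X(\sigma^n w)$ contains two distinct points, then $F_{w,rev}^n$, a piecewise translation, maps them to two (possibly coincident) points in $X(w)$. The goal is to show that a.s., for every $p\in X(w)$ and every $\eps>0$, for some $n$ one finds two distinct points of $X(\sigma^n w)$ whose $F_{w,rev}^n$-images are $\eps$-close to $p$ and remain distinct. This requires analyzing the conditional structure of $S_n$ given $\{|S_n|>0\}$: on these rare survival events, $S_n$ should typically contain indices of moderate separation, which via the progression $p-W_n-cv$ translate into pairs of close-but-distinct points of $X(w)$. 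Since the random piecewise isometries admit no contraction or expansion, the Cantor structure cannot be produced by hyperbolic mechanisms and must emerge purely from stochastic branching; the required statement amounts to a correlated analog of Yaglom's theorem for the critical branching process $|S_n|$, which appears genuinely nontrivial and is presumably the reason the statement is left as a conjecture.
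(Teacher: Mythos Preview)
The statement you are attempting to prove is explicitly labeled a \emph{Conjecture} in the paper; the authors offer no proof, only the remark that ``numerical simulations, as well as some \emph{very} rough heuristic arguments'' support it. There is therefore no paper proof to compare your proposal against: you are sketching an attack on an open problem.

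As a research outline your reduction is sensible, and you correctly isolate the two halves (null measure/total disconnectedness versus perfectness). But both halves contain genuine gaps, not just the second one.

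For total disconnectedness, the martingale identity $\E\bigl[|S_{n+1}|-|S_n|\,\big|\,\mathcal{F}_n\bigr]=0$ is correct, and integer-valued nonnegative martingales do converge almost surely. The weak point is your claimed uniform bound $\P(|S_{n+1}|\ne|S_n|\mid\mathcal{F}_n,\,|S_n|>0)\ge\delta>0$. Your argument that the count function $z\mapsto\#\{c\in S:cv\in[z-|A|,z]\}$ is not $v$-periodic only shows that the set of $z$ where the increment is nonzero is \emph{nonempty}; it does not give a lower bound on its Lebesgue measure that is uniform over all finite nonempty $S$. Indeed, for $S=\{0,k\}$ with $\{kv\}$ very close to $0$, or for long blocks $S=\{0,\dots,k\}$ with $\{(k+1)v\}$ small, the relevant symmetric-difference measure is $\varphi_A((k+1)v)$, which can be arbitrarily small. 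So a uniform $\delta$ does not follow from the periodicity obstruction alone; you would need either a different extinction argument (e.g.\ controlling how often $S_n$ can land in such ``bad'' configurations) or a more refined martingale estimate. There is also a minor boundary issue: $X(w)$ is defined via closures $\overline{F_{w,rev}^n(\Sc)}$, so $p\in X(w)$ does not literally require an infinite backward orbit through $p$; this does not affect the Fubini/measure-zero argument but should be acknowledged.

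For perfectness you are candid that the argument is incomplete, and your diagnosis is accurate: what is needed is a conditioned-on-survival structural result for the correlated critical process $(S_n)$, and no standard Yaglom-type theorem applies directly because the offspring law is neither i.i.d.\ nor independent across particles. This is precisely the obstruction the authors presumably had in mind when leaving the statement as a conjecture.
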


We would like to conclude the statement of results for this case by pointing out an interesting, and quite instructive, parallel of the synchronization observed here to some effects already known in the ordinary (non-random) dynamical systems. Namely: note that even though Theorem~\ref{t:sync} states that any two points spend \emph{most} of the time close to each other, the distance between them never (except for a case of two points on the same orbit of $T_{v}$) converges to zero. Indeed, the difference $x-y$ between the two points can be changed after one iteration only either by adding or by subtracting the fixed vector~$v$.

This is exactly the type of a situation that happens, for instance, in the Cherry flow (see~\cite{SV},~\cite{K2}), as well as for the separatrix loop or modified Bowen's example (see~\cite{K1}): the proportion of time spent by any individual point near the saddle becomes closer and closer to~$1$, even if from time to time the point of the orbit leaves the neighborhood of the saddle (only to get ``stuck'' there for a longer time after it comes back even closer). Perhaps even more instructive analogy is the example of a non-strictly expanding circle diffeomorphism with one neutral fixed point. Then, everything depends on the speed of repulsion at this point: as it follows from Inoue's results~\cite{I}, if the expanding map at this point behaves as $x\mapsto x (1+|x|^d)$ with $d>1$, the exit-time is Lebesgue-non-integrable, and only SRB measure is concentrated at the fixed point (see also~\cite{DKN2}). On the other hand, if $d<1$, the exit time is Lebesgue-integrable, and the iterations of the Lebesgue measure tend to an absolutely continuous invariant measure; this is in exact parallel with Theorems~\ref{t:no-sync} and~\ref{t:no-sync-2} below.

\subsection{One-dimensional case, no synchronization}

The non-integrability~\eqref{eq:non-int} turns out not only to be a sufficient, but also a necessary condition for the synchronization:

\begin{Thm}\label{t:no-sync}
Let $A\subset \Sc$ be a Borel set that has no translational symmetries, and assume that
$$
\int_{\Sc} \frac{1}{\varphi_A{(\eps)}}\, d\eps <+\infty.
$$
Then for Lebesgue-almost any $x,y\in S^1$ and for almost any sequence $w\in\Omega$ of iterations one has a weak convergence of measures
$$
\frac{1}{N} \sum_{n=1}^N \ddelta_{F_w^n(x)-F_w^n(y)} \to \bm,
$$
where the probability measure $\bm=\frac{1}{Z}\cdot \frac{dx}{\varphi_A(x)}$, \, $Z=\int_{\Sc} \frac{dx}{\varphi_A(x)}$ does not depend on $w$.
\end{Thm}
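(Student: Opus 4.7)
The plan is to recognize $z_n := F_w^n(x) - F_w^n(y)$ as a Markov chain on $\Sc$ and apply Birkhoff's ergodic theorem on its path space with $\bm$ as the invariant ergodic measure. Writing $a_n := F_w^n(x)$ and using $f_u = T_u \circ f$ together with $f(x) = x + v\cdot\mathbf{1}_A(x)$, one computes
\[
z_{n+1} = f(a_n) - f(a_n - z_n) = z_n + v\bigl(\mathbf{1}_A(a_n) - \mathbf{1}_A(a_n - z_n)\bigr).
\]
Because $a_{n+1} = f(a_n) + w_{n+1}$ with $w_{n+1}$ uniformly distributed on $\Sc$, the position $a_n$ is conditionally Lebesgue on $\Sc$ given $(z_1, \dots, z_n)$ for every $n \geq 1$. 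Averaging over $a_n$ and using $\Leb(A \setminus (A+z)) = \Leb((A+z) \setminus A) = \tfrac12 \varphi_A(z)$, it follows that $(z_n)_{n \geq 1}$ is a Markov chain with transition kernel
\[
z \mapsto z + v \text{ w.p.\ } \tfrac12\varphi_A(z), \quad z \mapsto z - v \text{ w.p.\ } \tfrac12\varphi_A(z), \quad z \mapsto z \text{ w.p.\ } 1 - \varphi_A(z).
\]

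Next I would verify that $\bm$ is an invariant probability measure for this chain: finiteness is exactly the integrability hypothesis, while the identity $\varphi_A(z)\, d\bm(z) = Z^{-1}\, dz$ reduces invariance to translation invariance of Lebesgue measure under $T_{\pm v}$. For ergodicity of $P$ with respect to $\bm$, if $Pf = f$ in $L^\infty(\bm)$ then on the full-measure set $\{\varphi_A > 0\}$ (here the no-translational-symmetries hypothesis is used) one has $f(z+v) + f(z-v) = 2f(z)$, so $g(z) := f(z+v) - f(z)$ satisfies $g(z) = g(z-v)$ a.e.\ and is $T_v$-invariant, hence a.e.\ constant by the standing irrationality of $v$. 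Boundedness of $f$ forces this constant to vanish, so $f$ is $T_v$-invariant, and therefore a.e.\ constant.

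Finally, Birkhoff's ergodic theorem applied to the shift on the chain's path space with initial distribution $\bm$ (invariant and shift-ergodic by the previous step) yields $\frac{1}{N}\sum_{n=1}^N h(z_n) \to \int h\, d\bm$ for every continuous $h \colon \Sc \to \R$, almost surely for $\bm$-a.e.\ initial value $z_1$; separability of $C(\Sc)$ upgrades this to weak convergence of the empirical measures. Since $\bm$ is equivalent to Lebesgue measure on $\Sc$ and $z_1$ is a measurable function of $(x,y)$ taking values in $\{z_0, z_0 \pm v\}$, a Fubini-type argument transfers the conclusion from $\bm$-a.e.\ starting point to Lebesgue-a.e.\ $(x,y)$. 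The subtle point to keep in mind is that from a fixed $z_1$ the chain lives on the countable $T_v$-orbit of $z_1$, which has $\bm$-measure zero --- on that orbit the chain is a null-recurrent simple random walk with state-dependent holding times --- so it is cleaner to apply Birkhoff on the continuous state space $\Sc$ (where $\bm$ is finite and ergodic) and transfer via Fubini than to invoke a Hopf/Chacon--Ornstein ratio theorem on the orbit.
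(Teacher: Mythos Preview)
Your argument is correct, but it follows a genuinely different route from the paper's. The paper does not verify directly that $\bm$ is $P$-invariant and ergodic; instead it reuses the ``slowed-down random walk'' representation from the proof of Theorem~\ref{t:sync}: the auxiliary process $\tz_j=z_0+c_j v$ (with $(c_j)$ a simple random walk on~$\Z$) is modeled together with the geometric holding times as a single measure-preserving skew product $G$ on $(\Sc\times\{\pm1\}^{\N})\times[0,1]^{\N}$ with an ergodic product measure, and Birkhoff is applied to the holding-time function $T$ and to $T\cdot\Ind_{\tz\in J}$; the ratio of the two averages then yields $\bm(J)$. So the measure $\bm$ emerges as the limiting occupation measure of a slowed walk rather than being checked to be stationary a priori.

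Your approach is more intrinsic: you identify $\bm$ by the one-line computation $\varphi_A\,d\bm=\tfrac1Z\,d\Leb$, and your ergodicity argument via the cocycle $g(z)=f(z+v)-f(z)$ is short and uses only the standing minimality of~$T_v$. The price is that you must be a bit careful with the Fubini transfer at the end (Birkhoff gives the conclusion for $\bm$-a.e.\ starting point, hence Leb-a.e.\ $z_1$, and then $z_1\in\{z_0,z_0\pm v\}$ with $z_0=x-y$ lets you pull back to Leb$\times$Leb-a.e.\ $(x,y)$; this is fine since translates of full-measure sets are full-measure). The paper's approach, by contrast, buys a unified picture with the non-integrable case of Theorem~\ref{t:sync} and makes transparent \emph{why} the density $1/\varphi_A$ appears (it is the expected holding time), at the cost of the extra skew-product machinery. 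Your closing remark about null recurrence on the $T_v$-orbit is apt: the paper's ratio computation is, in effect, a hands-on version of the ratio ergodic theorem you chose to sidestep.
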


In particular, as the measure $\bm$ does not charge $0$, it is immediate to say that there is no synchronization in any system satisfying
the assumptions of Theorem~\ref{t:no-sync}. The following example shows that such a behavior is possible even for not-so-bad set~$A$.

\begin{Ex}
Let $A\subset [0,1]$ be the Cantor set of positive measure, constructed $[0,1]$ in the following way. Take $M_0=[0,1]$ and construct for each $n\in \mathbb{N}$ the set $M_n$  by removing $2^{n-1}$ intervals of length $4^{-n}$ from $M_{n-1}$, centered at the middle of intervals of~$M_{n-1}$. Set $A=\cap_n M_n$. Then, for any sufficiently small $\eps$ we have $\varphi_A(\eps)\gtrsim {\eps}^{2/3}$; in particular, the function $\frac{1}{\varphi_A(\eps)}$ is integrable. Indeed, let $8^{-n}<\eps<8^{-(n-1)}$, where $n\ge 2$. Then
$$
\Leb(M_n\setminus A)=\sum_{j> n} 2^{j-1} 8^{-j} = \frac{1}{6} 4^{-n}.
$$
On the other hand,
$$
\frac{1}{2}\varphi_A(\eps)=\Leb(A\setminus T_{\eps}(A)) \ge \Leb(M_n\setminus T_{\eps}(M_n)) - \Leb(M_n\setminus A).
$$
Now, it is easy to see that $\Leb(M_n\setminus T_{\eps}(M_n)) \ge 2^{n-1} \cdot 8^{-n} = \frac{1}{2} 4^{-n}$, as at least all the intervals, removed on the $n$th step of construction, contribute to this difference. Thus,
$$
\frac{1}{2}\varphi_A(\eps)\ge \frac{1}{2} 4^{-n}-\frac{1}{6}4^{-n} =\frac{1}{3} 4^{-n} \ge \const \eps^{2/3}.
$$
\end{Ex}

\begin{Rem}
Slightly modifying the construction, one can find Cantor set $A$ with $\varphi_A(\eps)>\const \eps^{\alpha}$ for an arbitrary small $\alpha>0$.
\end{Rem}

\subsection{Higher-dimension case: no synchronization ever}

It turns out that in the higher-dimensional case the synchronization never takes place:
\begin{Thm}\label{t:no-sync-2}
Let $A\subset \T^k$ be a Borel subset that has no translational symmetries, and assume that $k>1$.
Then for Lebesgue-almost any $x,y\in \T^k$ and for almost any sequence $w\in\Omega$ one has a weak convergence of measures
$$
\frac{1}{N} \sum_{n=1}^N \ddelta_{F_w^n(x)-F_w^n(y)} \to \bm,
$$
where the probability measure $\bm$ does not depend on $w$.  Moreover, $\bm$ is given explicitly by
\begin{equation}\label{eq:onemore}
 \bm=\frac{1}{Z}\cdot \frac{dx}{\varphi_A(x)}, \, Z=\int_{T^k} \frac{dx}{\varphi_A(x)}.
\end{equation}
\end{Thm}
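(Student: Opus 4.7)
\medskip

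\noindent\emph{Proof plan.} We identify the difference process $\xi_n := F_w^n(x)-F_w^n(y)$ as a Markov chain on $\T^k$, verify that $\bm$ is its unique ergodic invariant probability measure, and conclude via the Birkhoff ergodic theorem applied to the Markov shift.

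\emph{Markov structure.} The shared random translation $w_{n+1}$ cancels in the difference, giving
\[
\xi_{n+1}=f(x_n)-f(y_n)=\xi_n+v\bigl(\Ind_A(x_n)-\Ind_A(y_n)\bigr).
\]
Since $x_n=f(x_{n-1})+w_n$ with $w_n$ uniform on $\T^k$ and independent of the past, $x_n$ is itself uniform on $\T^k$ conditionally on $\xi_1,\dots,\xi_n$; consequently $(x_n,y_n)=(x_n,x_n-\xi_n)$ is conditionally uniform on the anti-diagonal $\{(z,z-\xi_n):z\in\T^k\}$. This makes $(\xi_n)_{n\ge1}$ a Markov chain on $\T^k$ with transition kernel
\[
P(\xi,\cdot)=(1-\varphi_A(\xi))\,\delta_\xi+\tfrac{\varphi_A(\xi)}{2}\,\delta_{\xi+v}+\tfrac{\varphi_A(\xi)}{2}\,\delta_{\xi-v},
\]
using $\Leb(A\setminus T_\xi A)=\Leb(T_\xi A\setminus A)=\varphi_A(\xi)/2$.

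\emph{Invariance and ergodicity.} Writing $Pf$ and integrating against $d\bm=\frac{1}{Z\varphi_A(\xi)}d\xi$, the translation-invariance of Lebesgue gives
\[
\int Pf\,d\bm=\int f\,d\bm-\tfrac{1}{Z}\int f\,dx+\tfrac{1}{2Z}\int f\,dx+\tfrac{1}{2Z}\int f\,dx=\int f\,d\bm,
\]
so $\bm$ is $P$-invariant. If $\Ind_B$ is $P$-invariant $\bm$-a.e., expanding $P\Ind_B=\Ind_B$ gives $\varphi_A(\xi)\bigl[\Ind_B(\xi+v)+\Ind_B(\xi-v)-2\Ind_B(\xi)\bigr]=0$ almost everywhere; since $\varphi_A>0$ off $\xi=0$ by the no-translational-symmetries hypothesis, this forces $\Ind_B(\xi+v)=\Ind_B(\xi-v)=\Ind_B(\xi)$ Lebesgue-a.e. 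By the standing assumption $T_v$ is minimal, hence ergodic on $(\T^k,\Leb)$, so $\Ind_B$ is a.e.\ constant and $\bm(B)\in\{0,1\}$.

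\emph{Conclusion.} Birkhoff's ergodic theorem for the Markov shift now yields, for $\bm$-a.e.\ $\xi_0$ and $\P$-a.s.\ $w$, convergence $\frac{1}{N}\sum_{n=1}^N f(\xi_n)\to\int f\,d\bm$ for every $f\in L^1(\bm)$. Applying this simultaneously along a countable dense family in $C(\T^k)$ produces the claimed weak convergence. Since $1/(Z\varphi_A)$ is strictly positive off the null set $\{0\}$, $\bm$ is equivalent to Lebesgue on $\T^k$; therefore the ``good'' set of $\xi_0$'s has full Lebesgue measure, which by Fubini corresponds to Lebesgue-a.e.\ $(x,y)\in\T^k\times\T^k$ via $\xi_0=x-y$.

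\emph{Main obstacle.} The technical hurdle behind this clean outline is ensuring $Z<\infty$, so that $\bm$ is genuinely a probability measure; this is precisely where the dimensional hypothesis $k>1$ must enter. One needs a preliminary lemma showing that in dimension $k\ge 2$, the no-translational-symmetries hypothesis alone forces $\int_{\T^k}1/\varphi_A\,dx<\infty$. Heuristically this holds because $\varphi_A(\eps)$ typically vanishes only to first order in $|\eps|$ near the origin (codimension-one boundary complexity gives $\varphi_A\gtrsim|\eps|$), and $|\eps|^{-1}$ is Lebesgue-integrable on $\T^k$ exactly when $k\ge 2$; but securing such a lower bound uniformly over all Borel $A$ without symmetries appears to be the delicate geometric step that distinguishes this higher-dimensional statement from its one-dimensional counterpart (Theorem~\ref{t:no-sync}).
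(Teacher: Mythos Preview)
Your Markov-chain argument for the dynamical part is correct and in fact somewhat more streamlined than the paper's. The paper does not verify invariance and ergodicity of $\bm$ directly; instead it represents the difference process $z_n$ as a \emph{time-changed} simple random walk: it first runs the auxiliary walk $\tz_j = z_0 + c_j v$ (with $(c_j)$ a simple random walk on $\Z$), observes that $(\tz_j)$ is a.s.\ equidistributed w.r.t.\ Lebesgue by Kakutani's random ergodic theorem, and then inserts independent geometric waiting times with mean $1/\varphi_A(\tz_j)$. Birkhoff's theorem is applied to the function ``waiting time restricted to $J$'' on an enlarged skew-product, yielding the limiting proportions $\frac{1}{Z}\int_J \frac{dz}{\varphi_A(z)}$ directly. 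Your route---check $P$-invariance of $\bm$, reduce ergodicity to $T_v$-invariance of $\Ind_B$, and invoke Birkhoff on the Markov shift---bypasses the time-change bookkeeping; the paper's decomposition, on the other hand, makes the slowdown mechanism transparent and explains structurally why the answer is $d\bm\propto d\xi/\varphi_A(\xi)$ rather than having to guess and verify it.

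You are right that the only genuine obstacle is the finiteness of $Z$, and your heuristic $\varphi_A(\eps)\gtrsim |\eps|$ is exactly what the paper proves. But it is \emph{not} a delicate geometric step and needs no boundary regularity whatsoever: the key observation is that $\varphi_A$ is \emph{subadditive}, since
\[
A\symdiff T_{u+u'}A \subset (A\symdiff T_u A)\cup T_u(A\symdiff T_{u'}A).
\]
Now argue by contradiction. If $\varphi_A(u_n)/\dist(u_n,0)\to 0$ along some sequence, then (as $\varphi_A$ is continuous and, by the no-symmetries hypothesis, bounded away from zero off any neighbourhood of $0$) necessarily $u_n\to 0$. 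Set $N_n:=\lfloor 1/(2\dist(u_n,0))\rfloor$ and $V_n:=N_n u_n$; subadditivity gives $\varphi_A(V_n)\le N_n\varphi_A(u_n)\to 0$, while $\dist(V_n,0)\to \tfrac12$. Any accumulation point $V$ of $(V_n)$ then satisfies $\varphi_A(V)=0$ with $V\neq 0$, contradicting the hypothesis. This yields $\varphi_A(u)\ge \alpha\,\dist(u,0)$ globally, so $1/\varphi_A$ has at worst a $\dist(u,0)^{-1}$ singularity at the origin, which is Lebesgue-integrable on $\T^k$ precisely when $k\ge 2$. With this lemma supplied, your argument is complete.
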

The expression (\ref{eq:onemore}) is well defined (as we will see later in the proof of Theorem~\ref{t:no-sync-2}) since in the higher-dimensional situation the integral of $\frac{1}{\varphi_{A}(\eps)}$ \emph{always} converges.

\section{Proofs}
Let us start with the proof of Theorem~\ref{t:sync}:
\begin{proof}[Proof of Theorem~\ref{t:sync}]
Fix the points $x,y\in\Sc$. For a given sequence $w=(w_n)\in \Omega$ and the associated random images $x_n=F_w^n(x)$, $y_n=F_w^n(y)$, consider corresponding  sequence
$$
z_n=F_w^{n+1}(x)-F_w^{n+1}(y)=f(F_w^n(x)) - f(F_w^n(y)).
$$
%\todo{decide for the notations: $f_0$ or $f$?}
of differences between these random images (the last equality is due to the fact that the rotation by $w_{n+1}$ does not change the difference). A key remark is that the process $(z_n)_{n=1}^{\infty}$ is a stationary Markov process, and is governed by the following transitional probabilities:
\begin{equation}\label{eq:proc}
z_{n+1}=\begin{cases}
z_n+v, & \text{ with probability } \frac{1}{2}\varphi_A(z_n),\\
z_n-v, & \text{ with probability } \frac{1}{2}\varphi_A(z_n),\\
z_n, & \text{ with probability } 1-\varphi_A(z_n).
\end{cases}
\end{equation}
%where $p(z)=\Leb(A\setminus T_{-z}(A))=\frac{1}{2}\varphi_A(z)$.

Indeed, for known $w_1,\dots,w_n$ (and thus $z_n$), the conditional distribution of $x_{n+1}=F_w^{n+1}(x)$ is independent of them and is given by the Lebesgue measure: $x_{n+1}=T_{w_{n+1}}(f(F_w^{n}(x)))$, and $w_{n+1}$ is distributed w.r.t. the Lebesgue measure.
Now, notice that the application of $f$ changes the difference vector between the two points in the following way:
$$
f(a)-f(b) = \begin{cases}
 (a-b)+v, & a\in A, \, b\notin A,\\
(a-b)-v, & a\notin A, \, b\in A,\\
(a-b), & \text{otherwise}.
\end{cases}
$$
In particular,
\begin{equation}\label{eq:cz}
z_{n+1}=
f(x_{n+1})-f(y_{n+1}) = \begin{cases}
 z_n+v, & \text{ if } x_{n+1}\in A \setminus (T_{-z_n}(A)) \\
z_n-v, & \text{ if }x_{n+1}\in (T_{-z_n}(A)) \setminus A \\
z_n & \text{otherwise}.
\end{cases}
\end{equation}
The conditions on $x_{n+1}$ here come from the fact that $y_{n+1}=x_{n+1}+z_n$, hence $y_{n+1}\in A$ if and only if $x_{n+1}\in T_{-z_n}(A)$. Finally, we notice that $\Leb(A)=\Leb(T_{-z_n}(A))$ and hence
$$
\Leb(A \setminus (T_{-z_n}(A))) = \Leb ((T_{-z_n}(A)) \setminus A) = \frac{\Leb (A \symdiff T_{z_n}(A))}{2} =\frac{1}{2} \varphi_A(z_n).
$$
This concludes the proof of~\eqref{eq:proc}.

Now, let us change the point of view on the Markov process~\eqref{eq:proc}. Given $z_0=x-y$, we first consider a simple random walk $(c_j)$ on $\Z$, taking
$$
c_0=0, \quad c_{j+1}=\begin{cases}
 c_j+1& \text{with probability } 1/2, \\
 c_j-1& \text{with probability } 1/2.
\end{cases}
$$
Consider an auxiliary process $\tz_j=z_0+c_j v$. We claim that the Markov process~\eqref{eq:proc} can be seen as a ``slowing down'' of the process $\tz$. Namely, take any its trajectory~$(\tz_j)$, and consider independent random variables $t_j$ that are distributed geometrically with the mean $\frac{1}{\varphi_A(\tz_j)}$, that is,
\begin{equation}\label{eq:geom}
\P(t_j=i) = (1-q)\cdot q^{i-1}, \quad q=1-\varphi_A(\tz_j), \quad i=1,2,\dots.
\end{equation}
Then, the ``slowed down'' process
\begin{equation}\label{eq:slowing}
Z_n=\tz_{J(n)}, \quad J(n):=\max\{j \mid t_1+t_2+\dots t_j\le n\},
\end{equation}
is a Markov process that has the same law as~$z_n$.

Notice now that the trajectory of the process $\tz_j$ is almost surely asymptotically distributed w.r.t. the Lebesgue measure. Indeed, the trajectory of $\tz_j$ is a trajectory of a random dynamical system, generated by the translation $T_v$ and its inverse, each applied with the probability~$1/2$, and starting at the point~$z_0$. The Lebesgue measure is an ergodic stationary measure of this system. Thus, Kakutani's random ergodic theorem (see~\cite[Theorem 3.1]{Furman}, \cite{Kakutani}) implies that for almost any initial point $z$ almost surely its random trajectory is asymptotically distributed w.r.t. the Lebesgue measure. Now, if for a random sequence of iterations the trajectory of one point $z$ is asymptotically distributed w.r.t. the Lebesgue measure, then the same holds for any other point $z'$ (in particular, for $z'=z_0$): these two trajectories differ by a translation~$T_{z'-z}$.

At the same time, due to our assumptions the only zero of the function $\varphi_A(z)$ is $z=0$, and for the integral of the expectation we have
$$
\int_{\Sc} \frac{1}{\varphi_A(z)} dz= +\infty.
$$

This motivates the following
\begin{Lm}\label{l:sums}
Under the assumptions of Theorem~\ref{t:sync}, conditionally to any trajectory $\{\tz_j\}$ that is asymptotically distributed w.r.t. the Lebesgue measure (that is, $\frac{1}{j}\sum_{i=1}^j \delta_{\tz_j}$ weakly converges to~$\Leb$), almost surely
 the number of iterations of the slowed down process grows superlinearily,
\begin{equation}\label{eq:t-avg}
\frac{t_1+t_2+\dots+t_j}{j}\to +\infty\,  \text{ as } j\to \infty.
\end{equation}
while at the same time the number of iterations spent outside of any $\delta$-neighborhood of $z=0$ grows linearly:
\begin{equation}\label{eq:t-out-avg}
\forall \delta>0 \quad \frac{1}{j} \sum_{i=1}^j t_j\cdot \Ind_{S^1\backslash U_{\delta}(0)}(z_j) \to \int_{\Sc\setminus U_{\delta}(0)} \frac{1}{\varphi_A(z)}\, dz  <+\infty \quad \text{ a.s. as } j\to\infty,
\end{equation}
where $\Ind_B$ detotes the indicator function of the set~$B$.%\todo{REW: event to set!!!}
%\todo{Introduce $\Ind$ for indicator function?}
\end{Lm}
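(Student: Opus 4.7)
\medskip

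The plan is to prove (\ref{eq:t-out-avg}) first by a conditional strong law of large numbers, and then deduce (\ref{eq:t-avg}) from it by letting $\delta \to 0$ and using the non-integrability assumption~\eqref{eq:non-int}.

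For (\ref{eq:t-out-avg}), I would fix $\delta>0$ and set $g_\delta(z) := \frac{1}{\varphi_A(z)}\Ind_{\Sc\setminus U_\delta(0)}(z)$. Since $\varphi_A$ is continuous on $\Sc$ and by the no-translational-symmetries assumption vanishes only at $0$, there is a constant $c_\delta>0$ with $\varphi_A(z)\ge c_\delta$ on $\Sc\setminus U_\delta(0)$; hence $g_\delta$ is bounded. The asymptotic Lebesgue-equidistribution assumption on $\{\tz_j\}$, applied to the continuous function $g_\delta$ (or rather to a continuous approximation, since $g_\delta$ has jumps, which is harmless because the boundary of $U_\delta(0)$ has zero Lebesgue measure), yields
\[
\frac{1}{j}\sum_{i=1}^{j} g_\delta(\tz_i)\ \longrightarrow\ \int_{\Sc\setminus U_\delta(0)}\frac{dz}{\varphi_A(z)}.
\]
Now set $X_i := t_i\,\Ind_{\Sc\setminus U_\delta(0)}(\tz_i) - g_\delta(\tz_i)$. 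Conditionally on the trajectory $\{\tz_j\}$, the $X_i$ are independent, mean zero, and from~\eqref{eq:geom} their variance is bounded by $\varphi_A(\tz_i)^{-2}\Ind_{\Sc\setminus U_\delta(0)}(\tz_i)\le c_\delta^{-2}$, uniformly in $i$. Kolmogorov's strong law of large numbers then gives $\frac{1}{j}\sum_{i=1}^j X_i\to 0$ almost surely (conditionally on the trajectory, and hence unconditionally by Fubini). Adding the two limits yields (\ref{eq:t-out-avg}).

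For (\ref{eq:t-avg}), fix any $M>0$. By the divergence assumption~\eqref{eq:non-int} we can choose $\delta>0$ so small that $\int_{\Sc\setminus U_\delta(0)}\frac{dz}{\varphi_A(z)}>M$. Since $t_i\ge t_i\,\Ind_{\Sc\setminus U_\delta(0)}(\tz_i)$, from the limit in (\ref{eq:t-out-avg}) we get
\[
\liminf_{j\to\infty}\frac{1}{j}\sum_{i=1}^{j}t_i \ \ge\ \int_{\Sc\setminus U_\delta(0)}\frac{dz}{\varphi_A(z)}\ >\ M \quad\text{a.s.}
\]
Taking a countable sequence $\delta_n\to 0$ and intersecting the full-measure sets of convergence gives $\frac{1}{j}\sum t_i\to+\infty$ almost surely.

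The main technical point is the justification of the conditional SLLN for $X_i$: one must ensure that the variances stay uniformly bounded, which is exactly why the cutoff $\Ind_{\Sc\setminus U_\delta(0)}$ is introduced before taking the expectation. Everything else is routine, modulo a brief argument that the Lebesgue-equidistribution of $\{\tz_j\}$ applies to $g_\delta$ despite its discontinuity (handled by sandwiching between continuous functions agreeing with $g_\delta$ outside an arbitrarily small neighborhood of $\partial U_\delta(0)$).
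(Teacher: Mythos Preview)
Your proof is correct and follows essentially the same route as the paper: both arguments split $t_i\,\Ind_{\Sc\setminus U_\delta(0)}(\tz_i)$ into its conditional expectation $g_\delta(\tz_i)=\varphi_A(\tz_i)^{-1}\Ind_{\Sc\setminus U_\delta(0)}(\tz_i)$ plus a centered term, use equidistribution of $\{\tz_j\}$ for the former and a strong law of large numbers with uniformly bounded variances for the latter, and then deduce~\eqref{eq:t-avg} from~\eqref{eq:t-out-avg} by sending $\delta\to 0$ and invoking~\eqref{eq:non-int}. Your treatment is in fact slightly more careful in two places (the discontinuity of $g_\delta$ and the passage to a countable sequence $\delta_n\to 0$), but these are cosmetic differences.
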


Notice that Lemma \ref{l:sums} will immediately imply the statement of Theorem \ref{t:sync}. Indeed, as we mentioned already, for any initial points $x,y$ (and hence for any $\tz_0=z_0=x-y$) the trajectory $\tz_j$ is almost surely distributed w.r.t. the Lebesgue measure (here we assume that $x,y$ are not on the same trajectory of $T_v$, otherwise the claim of Theorem \ref{t:sync} trivially holds). Lemma~\ref{l:sums} then states that the convergences~\eqref{eq:t-avg} and~\eqref{eq:t-out-avg} hold, and dividing the latter by the former, we see that almost surely
\begin{equation}\label{eq:conv-to-0}
\frac{\sum_{i=1}^j t_j\cdot \Ind_{z_j\notin U_{\eps}(0)}  }{\sum_{i=1}^j t_j }  = \frac{\frac{1}{j}\sum_{i=1}^j t_j\cdot \Ind_{z_j\notin U_{\eps}(0)}  }{\frac{1}{j} \sum_{i=1}^j t_j }  \to 0 \quad \text{as } j\to\infty.
\end{equation}
In other words, the proportion of time spent by the slowed down process $Z_n$ outside of any $U_{\delta}(0)$ tends to zero as~$n\to\infty$. Formally speaking,~\eqref{eq:conv-to-0} is such a convergence for a subsequence of times $n$ that are of the form $n=t_1+\dots+t_j$, but as $Z$ does not change between such moments of time, it suffices to state the full convergence. (This is the same type of the argument that was used in~\cite{I}, \cite{K1}, \cite{SV})

%Establishing~\eqref{eq:t-avg} and~\eqref{eq:t-out-avg} will suffice to conclude the proof: diving the limit in~\eqref{eq:t-out-avg} by the one in~\eqref{eq:t-avg}, one sees that

In fact, proving both convergences for Lebesgue-almost any initial $z_0$ (and hence for almost any pair $x,y$ of initial points) would be a bit easier, as both~\eqref{eq:t-avg} and~\eqref{eq:t-out-avg} would follow from the Birkhoff ergodic theorem (see an analogous estimate in the proof of Theorem~\ref{t:no-sync} below). However, as we want to prove the almost sure synchronization for any pair of initial points $x,y$, we will have to make some more technical estimates.

\begin{proof}[Proof of Lemma~\ref{l:sums}]
Let us start with~\eqref{eq:t-out-avg}. Note that for any $\delta>0$ all random variables $t_j\cdot \Ind_{\tz_j\notin U_{\delta}(0)}$ have uniformly bounded dispersion: indeed, they are either identically zero, or geometric with a uniformly bounded expectation (the function $\varphi_A(z)$ is continuous and $z=0$ is its only zero). Hence, the difference
$$
\frac{1}{j} \left(\sum_{i=1}^j t_i \Ind_{\tz_j\notin U_{\delta}(0)} - \sum_{i=1}^j \E (t_i \Ind_{\tz_j\notin U_{\delta}(0)}) \right)
$$
almost surely tends to zero (see e.g.~\cite[Theorem 2.3.10]{SS}). Now, we have
$$
\frac{1}{j} \sum_{i=1}^j \E (t_i  \Ind_{\tz_j\notin U_{\delta}(0)}) =\frac{1}{j} \sum_{i=1}^j \frac{\Ind_{\tz_j\notin U_{\delta}(0)}}{\varphi_A(\tz_j)} \to \int_{\Sc\setminus U_{\delta}(0)} \frac{1}{\varphi_A(z)} dz \quad \text{as } j\to\infty,
$$
where the convergence in the right hand side follows from the asymptotic distribution of the trajectory~$\tz_j$. This concludes the proof of~\eqref{eq:t-out-avg}.

Now, for any $\delta$ we have
\begin{equation}\label{eq:l-int}
\liminf_{j\to\infty} \frac{t_1+t_2+\dots+t_j}{j} \ge \liminf_{j\to\infty} \frac{1}{j} \sum_{i=1}^j t_j\cdot \Ind_{z_j\notin U_{\delta}(0)} = \int_{\Sc\setminus U_{\delta}(0)} \frac{1}{\varphi_A(z)}\, dz.
\end{equation}
As $\delta>0$ can be chosen arbitrarily small, the integral in the right hand side of~\eqref{eq:l-int} can be made arbitrarily large (as the integral $\int_{\Sc} \frac{1}{\varphi_A(z)} dz $ diverges due to the assumptions of the theorem). Thus, the limit in the left hand side of~\eqref{eq:t-avg} is infinite.

This concludes the proof of Lemma \ref{l:sums}, and hence of Theorem~\ref{t:sync}.

\end{proof}
\end{proof}

\begin{proof}[Proof of Theorem~\ref{t:no-sync}]
First, notice that the random walk $\tz_j$ can be modeled by a map
\begin{equation}\label{eq:rw}
G_0:\Sc\times \{+1,-1\}^{\N} \to \Sc\times \{+1,-1\}^{\N}, \quad G_0(\tz,(c_j)) = (\tz+ v c_1, (c_{j+1}))
\end{equation}
with an ergodic measure $\nu$ which is a product of Lebesgue measure on $\Sc$ and the Bernoulli measure on $\{+1,-1\}^{\N}$ (i.e. on the space of sequences  $\{(c_j)_{j\in \mathbb{N}}\}$).%\todo{REWRITE IT!!!}

Though, to slow down the random walk $\tz_j$, we need the associated geometric distributions. To model these inside a skew product, we will use a standard argument: any distribution can be realized on $([0,1],\Leb)$. Define $\psi:[0,1]\times [0,1] \to \N$ as
$$
\psi(q,s)=k \quad \text{if } s\in[1-q^{k-1},1-q^k);
$$
then, for any $q\in (0,1)$ the random variable $\psi(q,\cdot):([0,1],\Leb)\to \N$ has a geometric distribution with the expectation~$\frac{1}{1-q}$.

Take the product $G$ of the system~\eqref{eq:rw} with the Bernoulli shift on the space~$[0,1]^{\N}$. It has a natural invariant measure $\overline{\nu}=\nu\times \Leb^{\N}$. Consider the function
$$
T: (\Sc\times \{+1,-1\}^{\N} )\times [0,1]^{\N} \to \N, \quad T(\tz,(c_j), (s_j)) = \psi\left(1-\varphi_A(\tz),s_1\right).
$$
Then,
\begin{equation}\label{eq:comp}
T\circ G^j = \psi \left(1-\varphi_A(\tz_{j}),s_{j+1}\right),
\end{equation}
where $\tz_{j}=\tz+ v(c_1+\dots+c_j)$. Note, that conditionally to any base point $(\tz,(c_j))$ --or, what is the same, to the associated random walk trajectory $(\tz_j)$,-- the functions $T$, $T\circ G$, \dots, $T\circ G^n$,\dots are independent as random variables (they depend on different $[0,1]$-coordinates $s_1,s_2,\dots, $ respectively), and have exactly the required distribution of times $t_1$, $t_2$,\dots.

The system $G$ is a product of an ergodic $G_0$ with a mixing Bernoulli shift, and hence is also ergodic. Thus, the application of the Birkhoff ergodic theorem to the function $T$ immediately implies
\begin{equation}\label{eq:t-int}
\frac{1}{j} (t_1+t_2+\dots+t_j) \to \int T d\,\overline{\nu}= \int_{\Sc} \frac{1}{\varphi_A(z)} dz=: Z.
\end{equation}
%\todo{give a name to the Bernoulli measures? to the product measure?}
almost surely for almost every $\tz_0$.

Now, for any interval $J\subset\Sc$ in the same way one has almost surely
\begin{equation}\label{eq:t-int-J}
\frac{1}{j} \left(\sum_{i=1}^j t_i \Ind_{\tz_i\in J}\right) \to \int T \Ind_{\tz_1\in J} d\,\overline{\nu} = \int_{J} \frac{1}{\varphi_A(z)} dz.
\end{equation}

Thus, for the slowed down process for the subsequence of moments $N_j:=t_1+\dots +t_j$  %\todo{conflict of notations with $N_k$ earlier?}
we have
\begin{multline}\label{eq:lim-Nj}
\frac{1}{N_j} \#\{n\le N_j \mid z_n\in J\} = \frac{1}{N_j} \sum_{i=1}^j  (t_i \Ind_{\tz_i\in J}) = \frac{1}{N_j/j}  \cdot \frac{1}{j} \sum_{i=1}^j  (t_i \Ind_{\tz_i\in J}) \to \\
\to \frac{1}{Z} \cdot \int_{J} \frac{1}{\varphi_A(z)} dz,
\end{multline}
where the limit for the first factor comes from~\eqref{eq:t-int}, and for the second factor from~\eqref{eq:t-int-J}. Now,~\eqref{eq:t-int} implies that almost surely $t_j=o(j)$, thus allowing to extend~\eqref{eq:lim-Nj} from the subsequence $N_j$ to all the natural numbers:
$$
\frac{1}{N} \#\{n\le N \mid z_n\in J\} \to \frac{1}{Z} \cdot \int_{J} \frac{1}{\varphi_A(z)} dz \quad \text{as } N\to\infty.
$$
This implies that $z_n$ are asymptotically distributed w.r.t. the measure $\frac{1}{Z}\cdot \frac{dz}{\varphi_A(z)}$, thus concluding the proof of Theorem \ref{t:no-sync}.

\end{proof}

\begin{proof}[Proof of Theorem~\ref{t:no-sync-2}]
The argument in higher dimension repeats verbatim the proof of Theorem~\ref{t:no-sync}. The only difficulty is that we do not anymore assume that the integral converges, thus we have to prove it. This is done by the following
\begin{Lm}\label{l:lower-phi}
Let $A\subset \T^k$ be a Borel subset that admits no translational symmetries. Then, there exists $\alpha>0$ such that
$$
\forall u\in \T^k  \quad \varphi_A(u)\ge\alpha \dist(u,0).
$$
\end{Lm}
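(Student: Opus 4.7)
\textbf{Proof plan for Lemma~\ref{l:lower-phi}.}
The plan is to exploit \emph{subadditivity} of $\varphi_A$ together with its continuity and the fact that, under the no-translational-symmetries hypothesis, it vanishes only at~$0$. The subadditivity $\varphi_A(u+v)\le\varphi_A(u)+\varphi_A(v)$ follows at once from the inclusion
$$
A\symdiff T_{u+v}A\subseteq(A\symdiff T_uA)\cup(T_uA\symdiff T_{u+v}A)
$$
and the translation invariance of Lebesgue measure, and it immediately iterates to $\varphi_A(mu)\le m\,\varphi_A(u)$ for every positive integer $m$. This is the only structural property of $\varphi_A$ I will use besides continuity.

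Given this, I would argue by contradiction. Suppose no such $\alpha$ exists; then there is a sequence $u_n\to 0$ in $\T^k\setminus\{0\}$ with $\varphi_A(u_n)/\dist(u_n,0)\to 0$. Fix some $\eps_0\in(0,1/4)$, strictly below the injectivity radius of $\T^k$, and set $m_n:=\lceil\eps_0/(2\dist(u_n,0))\rceil$, so that for large $n$ one has $m_n\dist(u_n,0)\in[\eps_0/2,\eps_0]$; since $m_n\dist(u_n,0)<1/2$ no wraparound occurs, and $\dist(m_nu_n,0)=m_n\dist(u_n,0)$. Subadditivity then yields
$$
\varphi_A(m_nu_n)\le m_n\varphi_A(u_n)=\bigl(m_n\dist(u_n,0)\bigr)\cdot\frac{\varphi_A(u_n)}{\dist(u_n,0)}\le\eps_0\cdot\frac{\varphi_A(u_n)}{\dist(u_n,0)}\longrightarrow 0,
$$
whereas the points $m_nu_n$ all lie in the compact annulus $K:=\{u\in\T^k:\dist(u,0)\in[\eps_0/2,\eps_0]\}$, on which the continuous function $\varphi_A$ is strictly positive and therefore bounded below by a positive constant. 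This contradiction establishes $\liminf_{u\to 0}\varphi_A(u)/\dist(u,0)>0$. A routine compactness argument on the complementary region $\{\dist(u,0)\ge r_0\}$, where $\varphi_A$ is uniformly bounded below and $\dist(\cdot,0)$ is bounded above by the diameter of $\T^k$, then upgrades the near-zero estimate to the global bound $\varphi_A(u)\ge\alpha\dist(u,0)$.

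I do not anticipate a serious obstacle: the argument is a short ``subadditive rescaling'' trick, and in fact it never uses the hypothesis $k>1$ — the conclusion holds in any dimension. The role of the assumption $k>1$ is external to the lemma; it is used only when the lemma is applied inside the proof of Theorem~\ref{t:no-sync-2}, where the bound $\varphi_A(u)\ge\alpha\dist(u,0)$ together with the elementary fact that $\int_{\T^k}du/\dist(u,0)<\infty$ precisely for $k\ge 2$ yields the required integrability of $1/\varphi_A$.
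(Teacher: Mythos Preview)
Your proof is correct and follows essentially the same ``subadditive rescaling'' strategy as the paper: both proofs establish subadditivity of $\varphi_A$, assume the bound fails along a sequence $u_n$ with $\varphi_A(u_n)/\dist(u_n,0)\to 0$, multiply $u_n$ by an integer $\sim 1/\dist(u_n,0)$ to produce vectors at a definite distance from $0$ on which $\varphi_A$ must nevertheless tend to $0$, and derive a contradiction with the no-translational-symmetries hypothesis. The only cosmetic differences are that the paper extracts a convergent subsequence to exhibit an explicit nonzero $V$ with $\varphi_A(V)=0$, whereas you invoke the uniform positive lower bound of $\varphi_A$ on a fixed compact annulus; and you separate the near-zero and away-from-zero regimes explicitly, while the paper folds the latter into the observation that the offending sequence must converge to~$0$. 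Your closing remark on where $k>1$ actually enters (integrability of $1/\dist(u,0)$) is also exactly how the paper uses the lemma.
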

\begin{proof}[Proof of Lemma \ref{l:lower-phi}]
Notice first that the function $\varphi_A$ is subadditive:
\begin{multline*}
\varphi_A(u+u') = \Leb(A\symdiff T_{u+u'}(A)) \le  \Leb(A\symdiff T_{u}(A)) + \Leb(T_u(A)\symdiff T_{u+u'}(A)) = \\ = \Leb(A\symdiff T_{u}(A)) + \Leb(T_u(A\symdiff T_{u'}(A))) = \varphi_A(u) + \varphi_A(u').
\end{multline*}

Now, assume that Lemma \ref{l:lower-phi} does not hold. Then, there exists a family of vectors $u_n$ such that $\varphi_A(u_n)<\frac{1}{n} \dist(u_n,0)$. The absence of the translational symmetries implies that $\varphi_A$ is bounded away from zero outside any neighborhood of~$0$, and hence one should have $u_n\to 0$. Now, consider the sequence of vectors $V_n=N_n\cdot u_n$, where $N_n=[\frac{1}{2\dist(u_n,0)}]$. We have
\begin{equation}\label{eq:Vn}
\varphi_A(V_n)\le N_n \varphi_A(u_n)\le \frac{1}{2\dist(u_n,0)} \cdot \frac{1}{n}\dist(u_n,0) \to 0 \quad \text{as } n\to\infty.
\end{equation}
On the other hand, $\dist(V_n,0)\to \frac{1}{2}$. Extracting any convergent subsequence $V_{n_i}\to V$, we find a vector $V$, for which $\dist(V,0)=\frac{1}{2}$ and $\varphi_A(V)=0$ due to~\eqref{eq:Vn}. This contradicts the assumption of the absence of the translational symmetries, and this contradiction concludes the proof.
\end{proof}

The lower bound from Lemma~\ref{l:lower-phi} implies that the singularity at $0$ of the integral $\int_{\T^k} \frac{1}{\varphi_A(z)}dz$ is at most of order $\frac{1}{\dist(z,0)}$, and hence the integral converges. The arguments of the proof of Theorem~\ref{t:no-sync} are then applicable verbatim.
\end{proof}

\begin{Rem}
Notice that the same arguments as in the proof of Lemma \ref{l:lower-phi} show that for any set $A$ either the conclusion of Lemma \ref{l:lower-phi} holds in a neighborhood of $0$, or $A$ admits a one-parametric group of translational symmetries.
\end{Rem}

\begin{proof}[Proof of Theorem~\ref{t:rev}]
First, note that the Lebesgue measure is the unique \emph{stationary} (i.e., equal to the average of its random images) measure of the considered system. Indeed, for \emph{any} measure $m$, one has
$$
\int_{\Sc} (T_u f)_* m \, d\Leb(u) = (f_*m)* \Leb = \Leb,
$$
where $\mu_1*\mu_2$ stays for the convolution of measures $\mu_1$ and $\mu_2$.

Next, a famous argument in the study of random dynamical systems, going back to Furstenberg (see~\cite[Corollary to Lemma~3.1 \& Proposition~3.4,  p.~20]{Fur2}), is that the sequence of iterations with reversed order of a stationary measure forms a martingale (with values in the space of measures), and hence converges almost surely. Hence, there exists a measurable map $\mL$ from $\Omega$ to the space of probability measures on $\Sc$, such that almost surely
$$
(F_{w,rev}^n)_* \Leb\to \mL(w) \quad \text {as } n\to\infty.
$$

On the other hand, Theorem~\ref{t:sync} implies that for any two $x,y\in\Sc$ almost surely
$$
\forall \delta>0 \quad \frac{1}{N} \# \{n\in \{1,\dots, N\} \mid \dist(F_w^n(x),F_w^n(y))<\delta \} \to 1 \quad \text{ as } N\to\infty,
$$
which can be rewritten as
$$
\frac{1}{N} \sum_{n=1}^N \dist(F_w^n(x),F_w^n(y)) \to 0 \quad \text{ as } N\to\infty.
$$
Integrating over $x$ and over $y$, we get
$$
\frac{1}{N} \sum_{n=1}^N D((F_w^n)_* \Leb) = \frac{1}{N} \sum_{n=1}^N \iint \dist(F_w^n(x),F_w^n(y)) \, dx\, dy \to 0 \quad \text{ as } N\to\infty.
$$

Now, take the expectation: we get
$$
\frac{1}{N} \sum_{n=1}^N \int D((F_w^n)_* \Leb)  \, d\P(w) \to 0 \quad \text{ as } N\to\infty.
$$
For any fixed $n$, the laws of random compositions $F_w^n$ and $F_{w,rev}^n$ coincide: both are the compositions of $n$ independently chosen maps (the difference comes when we consider the sequence of iterations). Hence,
$$
\int D((F_w^n)_* \Leb)  \, d\P(w)= \int D((F_{w,rev}^n)_* \Leb)  \, d\P(w),
$$
and thus
$$
\frac{1}{N} \sum_{n=1}^N \int D((F_{w,rev}^n)_* \Leb)  \, d\P(w) \to 0 \quad \text{ as } N\to\infty.
$$

But we know that almost surely $(F_{w,rev}^n)_* \Leb\to \mL(w)$, what implies $D((F_{w,rev}^n)_* \Leb) \to D(\mL(w))$ and hence
$$
\frac{1}{N} \sum_{n=1}^N \int D((F_{w,rev}^n)_* \Leb)  \, d\P(w) \to \int D(\mL(w)) \, d\P(w)   \quad \text{ as } N\to\infty.
$$
Thus, the integral $\int D(\mL(w)) \, d\P(w)$ vanishes, and hence $\mL(w)$ is almost surely a Dirac measure:
$$
\mL(w)=\delta_{L(w)}.
$$
This concludes the proof of Theorem \ref{t:rev}.
\end{proof}

\begin{proof}[Proof of Theorem~\ref{t:Leb-sync}]
From Theorem~\ref{t:rev} we have almost surely
$$
(F_{w,rev}^n)_* \Leb\to \delta_{L(w)}  \quad \text{ as } n\to\infty,
$$
and hence almost surely
$$
D((F_{w,rev}^n)_* \Leb)\to 0  \quad \text{ as } n\to\infty.
$$
In particular, for any $\eps>0$
$$
\P(D((F_{w,rev}^n)_* \Leb)>\eps) \to 0  \quad \text{ as } n\to\infty.
$$
However (as we have already seen in the proof of Theorem~\ref{t:rev}), for any $n$ the laws of random compositions $F_w^n$ and $F_{w,rev}^n$ coincide. Hence,
$$
\P(D((F_{w,rev}^n)_* \Leb)>\eps) = \P(D((F_{w}^n)_* \Leb)>\eps),
$$
and thus
$$
\P(D((F_{w}^n)_* \Leb)>\eps) \to 0  \quad \text{ as } n\to\infty.
$$
\end{proof}

\end{document}